\newtheorem{theorem}{Theorem}[section]
\newtheorem{proposition}[theorem]{Proposition}
\newtheorem{lemma}[theorem]{Lemma}
\newtheorem{corollary}[theorem]{Corollary}
\author{Mikl\'os B\'ona \affiliationmark{1}
  }
\title[Vertex rank in phylogenetic trees]{On the number of vertices of each rank in \\
 $k$-phylogenetic trees}
\affiliation{
  Department of Mathematics,
  University of Florida, Gainesville, FL, USA}
\keywords{search tree,  root, leaves, ranks, enumeration, asymptotic, distribution,
numerical data}
\begin{document}
\publicationdetails{18}{2016}{3}{7}{653}
\maketitle

\begin{abstract}
  We find surprisingly simple formulas for the limiting probability that the rank of a randomly selected vertex
in a randomly selected  $k$-phylogenetic tree  is a given integer. 
\end{abstract}

\section{Introduction}
Various parameters of many models of random rooted trees are fairly well understood {\em if they relate to a near-root part of the tree or to global tree structure}. The first group includes, for instance, the numbers of vertices at given distances from the root, the immediate progeny sizes
for vertices near the top, and so on. See \cite{flajoletsedgewick} for a comprehensive treatment of these results. The tree height and width are parameters of global nature, see  \cite{kol,dev,mahmoudpit,averagerootheight,kespit,growtrees,cheon,mansour} for instance. In recent years there has
been a growing interest in analysis of the random tree fringe, i. e. the  tree part close to the leaves, 
 \cite{aldous,mahmoudward1,mahmoudward2,du,protected,treeav,janson-holmgren,janson-holmgren-2,janson}. These articles  either focused on unlabeled trees, or trees in which every vertex was labeled. 

In this paper, we study another natural class of trees, those in which {\em only the leaves are labeled}. Some trees of this kind have been studied from different aspects. See \cite{flajolet2009is} for a result of the present author and Philip Flajolet on the subject, or Chapter 5 of \cite{stanley} for enumerative results for two tree varieties of this class.  

First, we will consider {\em $k$-phylogenetic trees}, that is, rooted non-plane trees whose vertices are bijectively labeled 
with the elements of the set $[n]=\{1,2,\cdots ,n\}$, and in which each non-leaf vertex has exactly $k$ children. See Figure \ref{3phylotrees} for the set of all three 2-phylogenetic trees on label set $[3]$. 

\begin{figure} \label{3phylotrees}
\begin{center}
  \includegraphics[width=60mm]{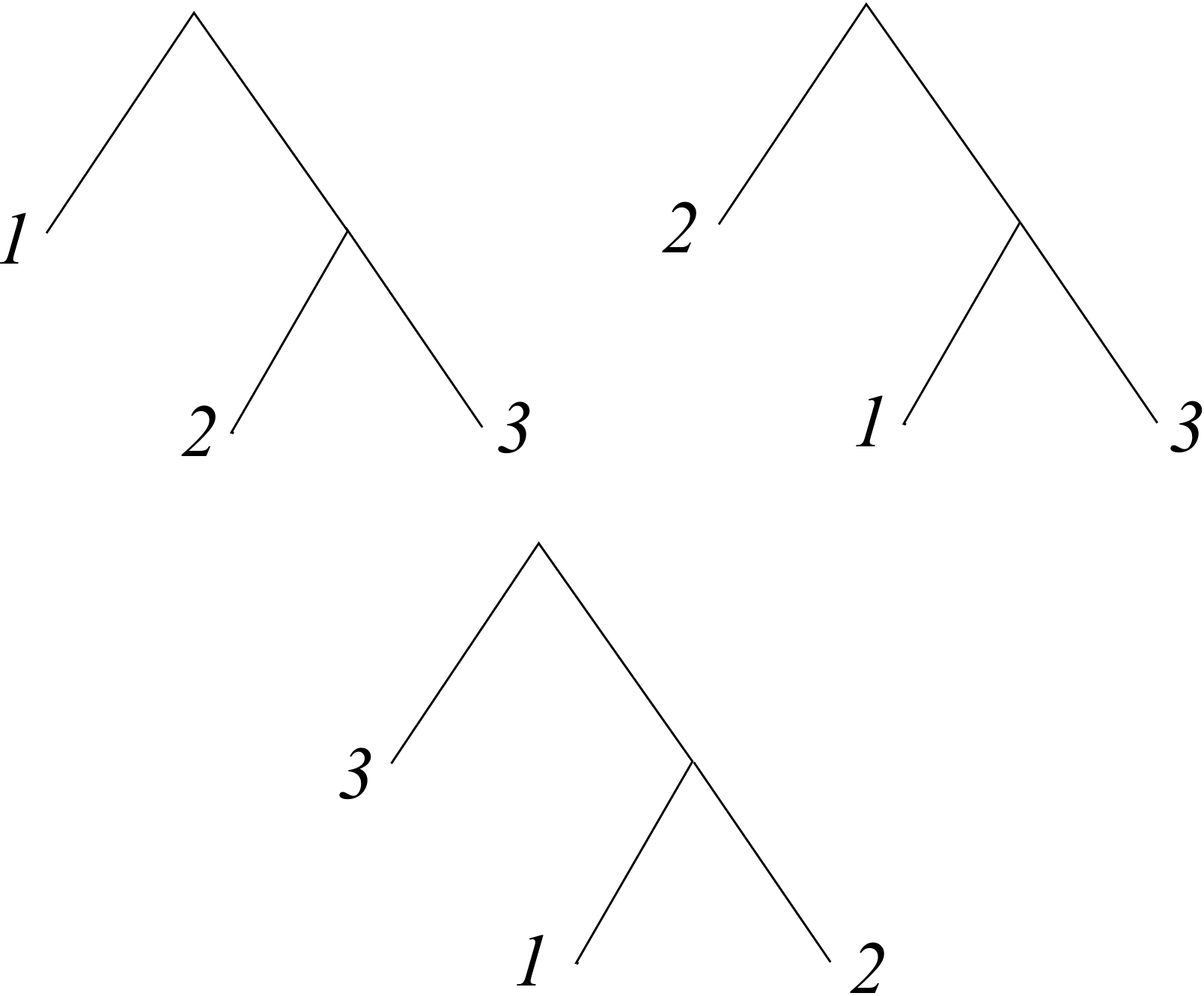}
  \caption{The three 2-phylogenetic trees on leaf set $[3]$.}
  \end{center}
 \end{figure}

We define the {\em rank} of a vertex as the distance of that vertex from its closest descendent leaf, so leaves have rank 0, neighbors of leaves have rank 1, and so on. Then for each fixed $i$, we are able to prove that as $n$ goes to infinity, the probability that a random vertex of a random phylogenetic tree on label set $[n]$ is of rank $i$ converges to a limit $P_{k,i}$, and we are able to compute that limit. The obtained numerical values will be much simpler than the numerical values obtained for other tree varieties, for instance in \cite{protected} or \cite{treeav}. Indeed, we will prove that \[P_{k,i}=k^{-c_i} -k^{-1-kc_i},\] where $c_i=c_{i,k}=(k^i-1)/(k-1)$.  This will follow from an even simpler formula  for the probability that a random vertex in a random $k$-phylogenetic tree is of rank {\em at least $i$}. 
  
The Lagrange inversion formula will be our main tool. 

These results are notable for several reasons. First, the obtained formulas are surprisingly simple. Second, the numbers $P_{k,i}$ decrease very fast, in a doubly exponential way. To compare, note that in \cite{treeav}, the corresponding numbers for binary search trees are shown to decrease in a simply exponential way. Third, the obtained explicit formulas make it routine to prove that the sequence $P_{k,i}$ is log-concave for any fixed $i$, a fact that is plausible to conjecture, but probably hopeless to prove, for many other tree varieties. Fourth, in the last section we will show an example to illustrate that even for 2-phylogenetic trees, there are similar questions that lead to much more complicated numerical
answers, so the simplicity of our results is surprising. 

We end the paper by a few open questions, asking for combinatorial proofs of some of the mentioned phenomena. 

\section{Enumeration}
\subsection{Our trees and the Lagrange inversion formula} \label{lagrange}
 Let $t_{k,n}$ be the number of $k$-phylogenetic trees
on leaf set $[n]$, and set $t_{k,0}=0$. Let $T_k(x)=\sum_{n\geq 0}t_{k,n}\frac{x^n}{n!}$ be the exponential generating function of the sequence of  these numbers.

Removing the root of such a tree, we get either the empty set, or an unordered set of $k$ such trees, leading
to the functional equation
\begin{equation} \label{funceq} T_k(x) = x + \frac{T_k^k(x)}{k!}. \end{equation} 
This means that $T_k(x)$ is the compositional inverse of the power series $F_k(x)=x-x^k/k!$, so the coefficients of 
$T_k(x)$ can be computed by the Lagrange inversion formula. However, that does not imply that the power series
$T_k(x)$ has a simple closed form. In fact, it usually does not, since it is a solution of a functional equation of degree $k$,
where $k$ can be arbitrarily high.

Let $m_{i,k}(n)$ denote the number of all vertices that are of rank at least $i$ in all $k$-phylogenetic trees on leaf set 
$[n]$.  Let  $M_{i,k}(x)$ be the exponential generating function of the numbers $m_{i,k}(n)$. Similarly let $r_{i,k}(n)$ be the number of $k$-phylogenetic trees on leaf set $[n]$ in which the root is of rank at least $i$,  and let $R_{i,k}(x)$ be the exponential generating function of the numbers $r_{i,k}(n)$.

While the Lagrange inversion formula cannot provide a closed form for most of our generating functions, it is still useful for us in that it enables us to prove  the following useful proposition.  We include the proof of the proposition, but it can be skipped without causing difficulties in reading the rest of the paper. 

\begin{proposition} \label{polynomial-small}
Let $p$ be a polynomial function. Then \[\lim_{n\rightarrow \infty} \frac{[x^n]p\left(T_k(x)\right)}{[x^n]M_{0,k}(x)} 
=0.\]
\end{proposition}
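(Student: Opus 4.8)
\medskip
\noindent\emph{Proof plan.} The idea is to reduce the statement to a comparison of single power-series coefficients and then invoke the Lagrange--B\"urmann inversion formula. First I would note that, counting edges in two ways, a $k$-phylogenetic tree on $[n]$ has $n$ leaves and $(n-1)/(k-1)$ internal vertices, hence $(kn-1)/(k-1)$ vertices altogether; therefore $m_{0,k}(n)=\frac{kn-1}{k-1}\,t_{k,n}$, i.e.
\[ [x^n] M_{0,k}(x) \;=\; \frac{kn-1}{k-1}\,[x^n] T_k(x). \]
So it suffices to prove $[x^n] p(T_k(x)) = O\!\big([x^n] T_k(x)\big)$ as $n\to\infty$, because then the quotient in the proposition is $\tfrac{k-1}{kn-1}\cdot O(1) = O(1/n)\to 0$. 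Writing $p(u)=\sum_{j=0}^{D}a_j u^{j}$ and discarding the constant term (which only changes $[x^0]$), this reduces further to showing, for each fixed $j\ge 1$, that $[x^n] T_k(x)^{j} = O\!\big([x^n] T_k(x)\big)$.

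Next I would use that $T_k(x)$ is the compositional inverse of $F_k(x)=x-x^{k}/k!$, so $T_k = x\,\phi(T_k)$ with $\phi(u)=\bigl(1-u^{k-1}/k!\bigr)^{-1}$, and Lagrange--B\"urmann gives for $n\ge 1$
\[ [x^n] T_k(x) = \tfrac1n\,[u^{n-1}]\phi(u)^{n}, \qquad [x^n] T_k(x)^{j} = \tfrac jn\,[u^{n-j}]\phi(u)^{n}. \]
Since $\phi(u)^{n}=\sum_{m\ge 0}\binom{n+m-1}{m}(k!)^{-m}u^{(k-1)m}$, the coefficient $[u^{s}]\phi(u)^{n}$ is nonzero exactly when $(k-1)\mid s$, with value $\binom{n+s/(k-1)-1}{s/(k-1)}(k!)^{-s/(k-1)}$. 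Thus $[x^n]T_k(x)\ne 0$ forces $n\equiv 1\pmod{k-1}$, and along that subsequence $[x^n]T_k(x)^{j}$ vanishes unless $j\equiv 1\pmod{k-1}$; when $j\equiv 1\pmod{k-1}$, setting $m_0=(n-1)/(k-1)$ and $d=(j-1)/(k-1)$ one obtains
\[ \frac{[x^n] T_k(x)^{j}}{[x^n] T_k(x)} \;=\; j\,(k!)^{d}\,\frac{\binom{n+m_0-d-1}{m_0-d}}{\binom{n+m_0-1}{m_0}} \;=\; j\,(k!)^{d}\prod_{\ell=0}^{d-1}\frac{m_0-\ell}{\,n+m_0-1-\ell\,}. \]
Each of the $d$ factors tends to $\frac{1/(k-1)}{k/(k-1)}=\frac1k$ because $m_0\sim n/(k-1)$, so the ratio converges to the finite constant $j\,(k!)^{d}k^{-d}$ and is in particular bounded. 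Hence $[x^n]T_k(x)^{j}=O\!\big([x^n]T_k(x)\big)$; summing over $1\le j\le D$ yields $[x^n]p(T_k(x))=O\!\big([x^n]T_k(x)\big)$, and the proposition follows (for $n\not\equiv 1\pmod{k-1}$ both $[x^n]M_{0,k}(x)$ and $[x^n]p(T_k(x))$ vanish, so the limit is taken along $n\equiv 1\pmod{k-1}$).

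The real content of the argument is that each power $T_k(x)^{j}$ exceeds $T_k(x)$ only by a bounded factor, not by a factor growing with $n$, which is exactly what lets the extra linear factor $\tfrac{kn-1}{k-1}$ in the coefficients of $M_{0,k}(x)$ drive the quotient to $0$; Lagrange inversion does the heavy lifting here, since it turns the asymptotics of $[x^n]T_k(x)^{j}$ into the elementary coefficient extraction above rather than a saddle-point estimate. The one place where care is needed is the divisibility bookkeeping modulo $k-1$: keeping track of which coefficients vanish, so that numerator and denominator are compared along the correct subsequence of $n$.
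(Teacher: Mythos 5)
Your proof is correct and takes essentially the same route as the paper: reduce to monomials $T_k^{j}$, apply Lagrange inversion to get the explicit coefficient, use that $[x^n]M_{0,k}(x)$ is the vertex count $\frac{kn-1}{k-1}$ times $[x^n]T_k(x)$, and let the resulting $O(1/n)$ factor kill the bounded ratio $[x^n]T_k^{j}/[x^n]T_k$. If anything, your bookkeeping is slightly more careful than the paper's (you distinguish the two congruence parameters $m_0$ and $d$ and note that $[x^n]T_k^{j}$ vanishes unless $j\equiv 1\pmod{k-1}$, whereas the paper reuses one symbol $s$ for both), but the substance is identical.
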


\begin{proof}
Note that  $[x^n]M_{0,k}(x)$ as well as  $[x^n]T_k(x)$, and hence, $[x^n] p\left(T_k(x)\right)$ are nonzero if and only if
$n-1$ is divisible by $k-1$. Indeed, growing a $k$-phylogenetic tree from a single root by turning leafs into parents of leaves,
each step will increase the number of leaves by $k-1$.  

Clearly, it suffices to prove the statement in the special case when $p(x)=x^{\ell}$, that is, when $p(T_k(x))=T_k^{\ell}(x)$. Indeed, 
all polynomials are linear combinations of such monomials with constant coefficients.  We can also assume that $\ell >0$, since the stament is obviously true for the polynomial $x^0=1$. 

We use the following version of the Langrange inversion formula (see Chapter 5 of \cite{stanley} for a proof).
Let $n$ and $\ell$ be positive integers, and let $F^{\langle -1 \rangle}(x)$ be the compositional inverse of the power series
$F(x)$. Then 
\begin{equation} \label{lif}   n[x^n] (F^{\langle -1 \rangle}(x))^{\ell} =\ell [x^{n-\ell}] \left (\frac{x}{F(x)}\right )^n.                                                                                                                         \end{equation}

Setting $F(x)=F_k(x)=x-\frac{x^k}{k!}$, and recalling that $F^{\langle -1 \rangle}(x)=T_k(x) $, formula (\ref{lif}) yields
\[  n[x^n] T_k^{\ell}(x) = \ell [x^{n-\ell}] \left (\frac{x}{x-\frac{x^k}{k!}}\right )^n.\] From this, we compute
\begin{eqnarray*} [x^n] T_k^{\ell}(x) & =& \frac{\ell}{n}  [x^{n-\ell}] \left(1-\frac{x^{k-1}}{k!} \right)^{-n} \\
& = & \frac{\ell}{n}  [x^{n-\ell}] \sum_{s\geq 0} {-n\choose s} \left(-\frac{x^{k-1}}{k!} \right)^s \\
& = & \frac{\ell}{n}  [x^{n-\ell}] \sum_{s\geq 0} {n+s-1\choose s} \frac{x^{s(k-1})}{k!^s}. 
\end{eqnarray*}

So, setting $n-\ell=s(k-1)$, we have $n=s(k-1)+\ell$, and the last displayed chain of equalities implies that
\begin{equation} \label{firstcoeff}  [x^n] T_k^{\ell}(x) =  \frac{\ell}{s(k-1)+\ell} {ks+\ell-1\choose s} \frac{1}{k!^s}.\end{equation} 

Note that in particular, for $\ell=1$, we get
\begin{equation} \label{firstcoeffspec}  [x^n] T_k(x) =  \frac{1}{s(k-1)+1} {ks\choose s} \frac{1}{k!^s}.\end{equation} 

On the other hand, as  $M_{0,k}(x)$ counts all vertices of  all $k$-phylogenetic trees on leaf set $[n]$. As we said at the begining of this proof, this implies that $n=(k-1)s+1$, for some nonnegative integer $s$, and it is easy to see that such trees 
have exactly $s$ non-leaf vertices, and therefore, $ks+1$ total vertices. So each coefficient of $M_{0,k}$ is $ks+1$ times
as large as the corresponding coefficient of $T_k(x)$. 

Therefore, it follows from (\ref{firstcoeffspec}) that 
\[ [x^n]M_{0,k}(x)= (ks+1)  \frac{1}{s(k-1)+1} {ks\choose s} \frac{1}{k!^s}.\]
Comparing this with (\ref{firstcoeff}), we get that
\[ \frac{[x^n] 
\left(T_k(x)^{\ell} \right)}{[x^n]M_{0,k}(x)}  = 
\frac{\frac{\ell}{s(k-1)+\ell} \cdot  {ks+\ell-1\choose s} \cdot \frac{1}{k!^s}}{(ks+1)  \cdot \frac{1}{s(k-1)+1} {ks\choose s} \cdot \frac{1}{k!^s}} \]
\[=\frac{1}{ks+1} \cdot \frac{(s(k-1)+1)\ell}{s(k-1)+\ell} \cdot \frac{(ks+\ell-1)(ks+\ell-2)\cdots (ks+\ell-s)}{(ks)(ks-1)\cdots 
(ks-s+1)} .\] As $n$ goes to infinity, so does $n-1=(k-1)s$, and therefore, $ks$. So the product in the last displayed line clearly converges to 0, since the first term converges to 0, the second one converges to the fixed integer $\ell$, and the third one converges to 1. 
\end{proof}

\subsection{Formulas for generating functions}
We will now use the tools discussed in Section \ref{lagrange} to prove some enumerative lemmas.

\begin{lemma} 
For all  integers $k\geq 2$, and for all integers $i\geq 0$, the equality
\[M_{i,k}(x)=M_{i,k}(x)\cdot \frac{T_k(x)^{k-1}}{(k-1)!}+ R_{i,k}(x)\] holds. 
\end{lemma}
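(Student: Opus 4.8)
The plan is to prove the identity combinatorially, by decomposing ``pointed'' phylogenetic trees. Consider pairs $(T,v)$ in which $T$ is a $k$-phylogenetic tree on leaf set $[n]$ and $v$ is a vertex of $T$ of rank at least $i$. By the very definition of $m_{i,k}(n)$, the number of such pairs for a fixed $n$ is $m_{i,k}(n)$, so the exponential generating function (in the variable $x$ marking leaves) of this class of pointed trees is exactly $M_{i,k}(x)$. I would then split this class into two subclasses according to whether or not $v$ is the root of $T$, and compute the generating function of each.

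If $v$ is the root, then $(T,v)$ is determined by $T$ alone, and $T$ ranges precisely over the $k$-phylogenetic trees whose root has rank at least $i$; by definition the generating function of this subclass is $R_{i,k}(x)$. If $v$ is not the root, then $T$ has a non-leaf root, so deleting the root decomposes $T$ into an unordered collection of $k$ nonempty $k$-phylogenetic trees whose leaf label sets partition $[n]$, exactly as in the derivation of (\ref{funceq}). The marked vertex $v$ lies in exactly one of these $k$ subtrees, call it $T'$, and since the set of descendants of $v$ is the same in $T$ and in $T'$, the rank of $v$ in $T'$ is still at least $i$. Hence a pair $(T,v)$ in this subclass is equivalent to the data of one \emph{marked} pointed tree $(T',v)$ (generating function $M_{i,k}(x)$) together with an unordered set of $k-1$ further, unmarked, nonempty $k$-phylogenetic trees, all $k$ leaf label sets partitioning $[n]$. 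Because the $k-1$ unmarked subtrees have pairwise disjoint nonempty label sets (so no two coincide) and each is also distinguishable from $T'$, the standard labeled-set construction gives $M_{i,k}(x)\cdot \frac{T_k(x)^{k-1}}{(k-1)!}$ for this subclass. Adding the two generating functions yields the asserted equality.

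The one place where care is genuinely needed, and which I would spell out explicitly, is the symmetry factor in the second subclass: it is $(k-1)!$ rather than $k!$ precisely because distinguishing the subtree $T'$ that contains the marked vertex breaks the symmetry among the $k$ subtrees hanging off the root. The accompanying observation that rank is preserved when passing from $v$ in $T$ to $v$ in the subtree $T'$ is immediate but must be noted, since rank is defined via descendant leaves only. Once these two points are granted, everything else is a routine application of the sum and product rules for exponential generating functions, so I expect no further obstacle.
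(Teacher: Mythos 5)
Your proof is correct and follows essentially the same route as the paper: split the pointed trees according to whether the marked vertex of rank at least $i$ is the root (giving $R_{i,k}(x)$) or not (remove the root and apply the product formula to one marked subtree and an unordered set of $k-1$ unmarked ones, giving $M_{i,k}(x)\cdot T_k(x)^{k-1}/(k-1)!$). Your explicit remarks on the $(k-1)!$ symmetry factor and on rank being preserved in the subtree are exactly the points the paper leaves implicit.
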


\begin{proof} Removing the root of a $k$-phylogenetic tree in which one non-root vertex of rank at least $i$ is marked, we get 
one such tree with one marked vertex of rank at least $i$, and an unordered set of $k-1$ trees with no marked vertices. By the product formula of exponential generating functions, such collections have generating function $M_{i,k}(x) \cdot  \frac{T_k(x)^{k-1}}{(k-1)!}$. 
On the other hand, trees in which the root is marked and is of rank at least $i$ are simply counted by $R_{i,k}(x)$.
\end{proof}

Therefore, \begin{equation}
M_{i,k}(x)= \frac{ R_{i,k}(x)}{1- \frac{T_k(x)^{k-1}}{(k-1)!} } .\end{equation}

\begin{proposition} \label{generalrec}  For all $i\geq 1$, the recurrence relation 
\begin{equation} R_{i,k}(x)= \frac{R_{i-1,k}^k(x)}{k!} \end{equation}
holds.
\end{proposition}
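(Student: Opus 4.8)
The plan is to argue combinatorially, using the same root-removal decomposition as in the previous lemma, while keeping careful track of what ``the root has rank at least $i$'' becomes after the root is deleted. The crucial observation is a local recursion for the rank: if $v$ is an internal vertex with children $c_1,\dots,c_k$, then the closest descendant leaf of $v$ is obtained by first stepping to some $c_j$ and then descending to \emph{its} closest descendant leaf, so $\mathrm{rank}(v)=1+\min_j \mathrm{rank}(c_j)$. Hence, for $i\geq 1$, the root of a $k$-phylogenetic tree has rank at least $i$ if and only if the root is not a leaf and each of its $k$ children, regarded as the root of its own subtree, has rank at least $i-1$.

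First I would note that, since $i\geq 1$, any tree counted by $R_{i,k}$ has a non-leaf root; deleting that root therefore leaves an unordered set of exactly $k$ nonempty $k$-phylogenetic trees whose leaf sets partition $[n]$, and by the observation above each of these $k$ subtrees is counted by $R_{i-1,k}$. Conversely, gluing any such set of $k$ subtrees under a common new root yields a tree counted by $R_{i,k}$; because only leaves carry labels, the new root introduces no label-placement choice. Next I would invoke the product formula for exponential generating functions of labeled structures: since the $k$ subtrees have pairwise disjoint nonempty leaf sets, they are automatically pairwise distinct, so passing from ordered $k$-tuples on ordered set partitions of $[n]$ to the actual unordered configurations divides the count by exactly $k!$. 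This gives $R_{i,k}(x)=R_{i-1,k}(x)^k/k!$, as claimed.

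As a consistency check I would verify the base case $i=1$: every root has rank at least $0$, so $R_{0,k}=T_k$, and the recurrence predicts $R_{1,k}=T_k^k/k!$, which by the functional equation (\ref{funceq}) equals $T_k-x$, i.e.\ the generating function for all $k$-phylogenetic trees other than the single labeled leaf --- exactly the trees whose root is not a leaf, hence of rank at least $1$.

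The only step that genuinely requires care --- and it is more a bookkeeping point than a real obstacle --- is justifying the factor $1/k!$: one must be certain that the collection of subtrees is an honest \emph{set} (guaranteed by the disjointness and nonemptiness of their leaf sets, which rules out any nontrivial stabilizer) so that the standard ``set of $k$ labeled structures'' translation applies verbatim, and that the unlabeled internal root contributes no additional factor. Everything else is a direct application of the labeled product formula, so no estimates or use of Lagrange inversion are needed here.
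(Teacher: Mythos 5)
Your proof is correct and follows the same route as the paper: remove the root, observe that for $i\geq 1$ the root has rank at least $i$ exactly when each of its $k$ children has rank at least $i-1$, and apply the product formula for exponential generating functions to the resulting unordered set of $k$ subtrees. The paper's own proof is just a terser version of this; your extra care about the rank recursion and the $1/k!$ factor is sound but not a different argument.
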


\begin{proof} Removing the root of a $k$-phylogenetic tree in which the root has rank at least $i$, we get an unordered set of $k$ such trees in which the root has rank at least $i-1$. The claim now follows from the product formula. 
\end{proof}

Let us introduce the notation \[c_i=c_{i,k}=\frac{k^i-1}{k-1}\]
for shortness. 

\begin{corollary} 
For all $i\geq 0$, the equality 
\begin{equation} R_{i,k}(x)=  \frac{T_k(x)^{k^i}} {k!^ {c_i }}  \end{equation} 
holds. 
\end{corollary}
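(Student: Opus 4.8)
The plan is to proceed by induction on $i$, using Proposition \ref{generalrec} for the inductive step and checking that the exponents obey the claimed recursion.

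For the base case $i=0$, I would observe that $R_{0,k}(x)$ is the generating function for $k$-phylogenetic trees whose root has rank at least $0$; since every vertex has rank at least $0$, this is simply the generating function for all $k$-phylogenetic trees, namely $T_k(x)$. On the other hand $k^0=1$ and $c_0=(k^0-1)/(k-1)=0$, so $T_k(x)^{k^0}/k!^{c_0}=T_k(x)$, and the two sides agree.

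For the inductive step, assume the formula holds for $i-1$, that is, $R_{i-1,k}(x)=T_k(x)^{k^{i-1}}/k!^{c_{i-1}}$. Substituting this into the recurrence $R_{i,k}(x)=R_{i-1,k}^k(x)/k!$ of Proposition \ref{generalrec} yields $R_{i,k}(x)=T_k(x)^{k\cdot k^{i-1}}/k!^{kc_{i-1}+1}=T_k(x)^{k^i}/k!^{kc_{i-1}+1}$, so it only remains to verify the identity $kc_{i-1}+1=c_i$. This is immediate from the definition $c_i=(k^i-1)/(k-1)$: one computes $kc_{i-1}+1=k(k^{i-1}-1)/(k-1)+1=(k^i-k+k-1)/(k-1)=(k^i-1)/(k-1)=c_i$.

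Since no step uses anything beyond the already-established recurrence and an elementary manipulation of a geometric partial sum, I do not anticipate any real obstacle here; the only point requiring a moment's care is the bookkeeping of exponents, in particular the identity $kc_{i-1}+1=c_i$, which is precisely the reason the shorthand $c_i=(k^i-1)/(k-1)$ was introduced just before the statement.
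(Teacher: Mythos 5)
Your proof is correct and is essentially the same as the paper's: both argue by induction on $i$, with the base case $R_{0,k}(x)=T_k(x)$ (since every root has rank at least $0$ and $c_0=0$), the inductive step via Proposition \ref{generalrec}, and the exponent identity $kc_{i-1}+1=c_i$. No issues.
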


\begin{proof} This is straightforward by induction. Indeed, for $i=0$, the equality $R_{i,k}(x)=T_k(x)$ holds, since in each tree, the root is of rank at least 0. Let us assume that the statement is true for $i-1$, that is, 
\[ R_{i-1,k}(x)=  \frac{T_k(x)^{k^{i-1}} }{k!^ {c_{i-1} }} .\]
Now take the $k$th power of both sides, then divide by $k!$. By Proposition \ref{generalrec}, this turns the left-hand side into 
$R_{i,k}(x)$, so we get the equality
\[R_{i,k}(x)=\frac{T_k(x)^{k^i} }{k!^ {kc_{i-1}+1 }} .\]
This proves our claim since $kc_{i-1}+1=c_i$.
\end{proof}

\begin{corollary} For all $i\geq 0$, the equality \begin{equation}
 \label{explicitformgen} M_{i,k}(x)= \frac{1}{k!^ {c_i} }  \cdot \frac{T_k(x)^{k^i}}{1- \frac{T_k(x)^{k-1}}{(k-1)!} }  \end{equation} 
holds.

In particular, the generating function for the total number of vertices is  \begin{equation}
\label{explicitform0}
M_{0,k}(x)=  \frac{T_k(x)}{1- \frac{T_k(x)^{k-1}}{(k-1)!} } .\end{equation} 
\end{corollary}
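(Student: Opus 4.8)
The plan is to combine the two facts already in hand. First, solving the linear identity of the Lemma, $M_{i,k}=M_{i,k}\cdot \frac{T_k(x)^{k-1}}{(k-1)!}+R_{i,k}$, for $M_{i,k}$ gives the rational expression
\[ M_{i,k}(x)=\frac{R_{i,k}(x)}{1-\frac{T_k(x)^{k-1}}{(k-1)!}}. \]
Second, the preceding Corollary supplies the closed form $R_{i,k}(x)=T_k(x)^{k^i}/k!^{c_i}$. Substituting this into the numerator of the displayed expression yields (\ref{explicitformgen}) at once.

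The only point that deserves a word is the legitimacy of the division, so I would note that the denominator $1-\frac{T_k(x)^{k-1}}{(k-1)!}$ is invertible as a formal power series. Since $t_{k,0}=0$, the series $T_k(x)$ has zero constant term, hence so does $T_k(x)^{k-1}$, so the denominator has constant term $1$; therefore $M_{i,k}(x)$ is a well-defined formal power series and all the manipulations are valid.

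For the ``in particular'' clause I would specialize to $i=0$: then $c_0=(k^0-1)/(k-1)=0$, so $k!^{c_0}=1$, and $k^0=1$, so $T_k(x)^{k^0}=T_k(x)$; substituting these into (\ref{explicitformgen}) gives (\ref{explicitform0}). (Equivalently this recovers the base case $R_{0,k}(x)=T_k(x)$ used in the proof of the preceding Corollary, reflecting the fact that every vertex has rank at least $0$, so $M_{0,k}$ counts all vertices.) There is no real obstacle here: the substance of the statement is carried entirely by the Lemma and the earlier Corollary, and what remains is a single substitution together with the trivial evaluation $c_0=0$.
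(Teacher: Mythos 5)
Your proposal is correct and matches the paper's (implicit) argument exactly: the corollary is obtained by substituting the closed form $R_{i,k}(x)=T_k(x)^{k^i}/k!^{c_i}$ into the displayed identity $M_{i,k}(x)=R_{i,k}(x)/\bigl(1-\frac{T_k(x)^{k-1}}{(k-1)!}\bigr)$, and specializing to $i=0$ using $c_0=0$. Your remark on the invertibility of the denominator as a formal power series is a sound extra check that the paper leaves unstated.
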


\subsection{Our main results}
Now we are in a position to state and prove the main result of this paper. 

\begin{theorem} For all integers $k\geq 2$, and for all integers $i\geq 1$, the equality
\begin{equation} \lim_{n\rightarrow \infty} \frac{m_{i,k(n)}}{m_{0,k}(n)} = \frac{1}{k^{c_i}}=
\frac{1}{k^{\frac{k^i-1}{k-1}}} \end{equation} holds.
\end{theorem}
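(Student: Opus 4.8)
The plan is to express the ratio $m_{i,k}(n)/m_{0,k}(n)$ directly in terms of coefficients of the generating functions $M_{i,k}(x)$ and $M_{0,k}(x)$, and then use the explicit formulas~(\ref{explicitformgen}) and~(\ref{explicitform0}) together with Proposition~\ref{polynomial-small} to pass to the limit. From~(\ref{explicitformgen}) and~(\ref{explicitform0}) we have
\[
M_{i,k}(x) = \frac{1}{k!^{c_i}}\cdot T_k(x)^{k^i-1}\cdot M_{0,k}(x),
\]
since the two generating functions share the common factor $\bigl(1-\tfrac{T_k(x)^{k-1}}{(k-1)!}\bigr)^{-1}$ and $M_{0,k}(x)$ contributes one factor of $T_k(x)$. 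So the first step is simply to record this identity.

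Next I would extract the $n$th coefficient. Writing $A(x) = M_{0,k}(x)$ and using $[x^n]\bigl(B(x)A(x)\bigr) = \sum_{j} [x^j]B(x)\,[x^{n-j}]A(x)$ with $B(x) = \tfrac{1}{k!^{c_i}} T_k(x)^{k^i-1}$, the coefficient $m_{i,k}(n)$ is a convolution of the coefficients of a fixed power of $T_k$ with those of $M_{0,k}$. The key point is that $T_k(x)$ has no constant term (indeed $T_k(x) = x + \cdots$), so $T_k(x)^{k^i-1}$ is divisible by $x^{k^i-1}$; hence the dominant contribution to the convolution comes from the lowest-order term of $T_k(x)^{k^i-1}$, namely $x^{k^i-1}$ itself, which picks out $[x^{n-k^i+1}]M_{0,k}(x)$. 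All other terms in the convolution involve $[x^m]M_{0,k}(x)$ for $m < n-k^i+1$, and I will need to argue these are negligible relative to $[x^n]M_{0,k}(x)$.

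Controlling that negligibility is the step I expect to be the main obstacle, and it is exactly where Proposition~\ref{polynomial-small} enters. One clean route: multiply the identity through by $\bigl(1-\tfrac{T_k^{k-1}}{(k-1)!}\bigr)$ to get $M_{i,k}(x)\bigl(1-\tfrac{T_k^{k-1}}{(k-1)!}\bigr) = \tfrac{1}{k!^{c_i}}T_k(x)^{k^i}$, and similarly $M_{0,k}(x)\bigl(1-\tfrac{T_k^{k-1}}{(k-1)!}\bigr) = T_k(x)$; comparing coefficients, $m_{i,k}(n)$ and $m_{0,k}(n)$ each satisfy a recurrence with the same ``denominator'' whose inhomogeneous term is a polynomial in $T_k$. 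Then the ratio $m_{i,k}(n)/m_{0,k}(n)$ differs from $\tfrac{1}{k!^{c_i}}\,[x^n]T_k^{k^i}(x)\big/[x^n]T_k(x)$ by terms that, after using Proposition~\ref{polynomial-small} (applied to the polynomial pieces) and the fact that $[x^n]M_{0,k}(x)$ grows much faster than $[x^n]$ of any polynomial in $T_k$, vanish in the limit. Alternatively, and perhaps more transparently, I would use the explicit formula~(\ref{firstcoeff}): with $n = s(k-1)+k^i$ on the numerator side (so the exponent $\ell = k^i$) and $n = s'(k-1)+1$ on the denominator side, a direct computation of
\[
\frac{1}{k!^{c_i}}\cdot\frac{[x^n]T_k(x)^{k^i}}{[x^n]T_k(x)}
\]
via~(\ref{firstcoeff}) and~(\ref{firstcoeffspec}) gives a ratio of binomial coefficients and falling factorials that converges to $\tfrac{1}{k^{c_i}}$ as $s\to\infty$ — here one checks that $k!^{c_i}$ in the denominator combines with the $k!^{-s}$ factors shifting appropriately, and the binomial ratio $\binom{ks+k^i-1}{s}\big/\binom{ks}{s}$ contributes the remaining powers of $k$; I expect the net effect after cancellation to be precisely $k^{-c_i}$, with lower-order multiplicative corrections tending to $1$.

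Finally I would assemble the two ingredients: the convolution argument reduces $\lim_n m_{i,k}(n)/m_{0,k}(n)$ to $\lim_n \tfrac{1}{k!^{c_i}}\,[x^n]T_k^{k^i}(x)\big/[x^n]T_k(x)$ (the error terms dying by Proposition~\ref{polynomial-small}), and the Lagrange-inversion computation evaluates that limit as $k^{-c_i} = k^{-(k^i-1)/(k-1)}$. The subtlety to be careful about throughout is the divisibility condition ``$n-1$ divisible by $k-1$'': one restricts $n$ to that arithmetic progression (the ratio is $0/0$ otherwise, or one simply takes the limit along the progression), and checks that $k^i - 1 \equiv 0 \pmod{k-1}$ so that the shifted index $n - k^i + 1$ stays in the same progression, keeping all the coefficients simultaneously nonzero.
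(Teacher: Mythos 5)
Your opening identity $M_{i,k}(x)=\frac{1}{k!^{c_i}}T_k(x)^{k^i-1}M_{0,k}(x)$ is correct, but both routes you propose for extracting the limit from it break down, and each would land on a constant different from $k^{-c_i}$. In the convolution $m_{i,k}(n)=\frac{1}{k!^{c_i}}\sum_j [x^j]T_k(x)^{k^i-1}\,[x^{n-j}]M_{0,k}(x)$, the terms with $j>k^i-1$ are \emph{not} negligible: for each fixed admissible $j$, the ratio $[x^{n-j}]M_{0,k}(x)\big/[x^n]M_{0,k}(x)$ tends to $\rho^{\,j}$, where $\rho>0$ is the common radius of convergence, so every term contributes a fixed positive fraction of $[x^n]M_{0,k}(x)$. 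Summing them all is exactly what produces the theorem's constant: the dominant singularity of $T_k$ sits where $1-T_k^{k-1}/(k-1)!$ vanishes, i.e.\ $T_k(\rho)^{k-1}=(k-1)!$, whence $\sum_j [x^j]T_k^{k^i-1}\rho^{\,j}=T_k(\rho)^{k^i-1}=(k-1)!^{c_i}$ and the limit is $(k-1)!^{c_i}/k!^{c_i}=k^{-c_i}$. Keeping only the lowest-order term $x^{k^i-1}$ would instead give $\rho^{k^i-1}/k!^{c_i}=\left(\frac{k-1}{k}\right)^{k^i-1}k^{-c_i}$ (since $\rho=\frac{k-1}{k}T_k(\rho)$), which is strictly too small; and Proposition \ref{polynomial-small} says nothing about these convolution tails, so it cannot rescue the step.

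The ``clean route'' fails for a related reason, and your own formulas detect it: by (\ref{firstcoeff}) and (\ref{firstcoeffspec}), writing $n=s(k-1)+k^i$, one computes $\frac{1}{k!^{c_i}}\,[x^n]T_k^{k^i}(x)\big/[x^n]T_k(x)=k^i\,{ks+(k-1)c_i \choose s}\big/{k(s+c_i) \choose s+c_i}\rightarrow k^{\,i-c_i}$, not $k^{-c_i}$; the cancellation you expect does not occur. The underlying issue is that the common factor $\left(1-\frac{T_k^{k-1}}{(k-1)!}\right)^{-1}$ blows up at the singularity and so changes the singularity type: for $M=N(T_k)/D(T_k)$ with $D$ vanishing at the singular value $\tau$ of $T_k$, coefficient ratios are governed by $N(\tau)$, whereas for the bare numerators they are governed by $N'(\tau)$, and here that discrepancy is precisely the factor $k^i$. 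The paper avoids all of this with an algebraic step your proposal is missing: writing $T_k^{k^i}=(k-1)!^{c_i}f^{c_i}T_k$ with $f=T_k^{k-1}/(k-1)!$ and using $f^{c_i}-1=(f-1)(f^{c_i-1}+\cdots+f+1)$, equation (\ref{explicitformgen}) becomes $M_{i,k}(x)=\frac{1}{k!^{c_i}}\left(p\left(T_k(x)\right)+(k-1)!^{c_i}M_{0,k}(x)\right)$ for an explicit polynomial $p$, and Proposition \ref{polynomial-small} is invoked only to discard $p(T_k(x))$. Some such exact decomposition (or a genuine singularity-analysis argument for the supercritical composition) is required; as written, your proof does not go through.
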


That is, for large $n$, about  $\frac{1}{k^{c_i}}$ of all vertices are of rank at least $i$.

\begin{proof}
We proceed by splitting a constant multiple of $M_{i,k}(x)$ into two parts,
one of which will turn out to be a constant multiple of $M_{0,k}(x)$, and the other one of which will turn out to be negligible,
 by a divisibility argument. 

To that end, we consider the rightmost factor in  (\ref{explicitformgen}), and essentially divide the numerator by the denominator,  noting that 
\begin{eqnarray*} \frac{T_k(x)^{k^i}}{1- \frac{T_k(x)^{k-1}}{(k-1)!} } & = & 
\frac{\left(\frac{T_k(x)^{(k-1)c_i} }{(k-1)!^{c_i}} -1 \right ) (k-1)!^{c_i}T_k(x) + (k-1)!^{c_i}T_k(x)}{1- \frac{T_k(x)^{k-1}}{(k-1)!} } \\
 & = &  \frac{\left( \frac{T_k(x)^{(k-1)c_i} }{(k-1)!^{c_i}} -1 \right)  (k-1)!^{c_i} T_k(x)}{1- \frac{T_k(x)^{k-1}}{(k-1)!} } + 
\frac{(k-1)!^{c_i}T_k(x)}{1- \frac{T_k(x)^{k-1}}{(k-1)!} } \\
& =&  \frac{\left( \frac{T_k(x)^{(k-1)c_i} }{(k-1)!^{c_i}} -1 \right)  (k-1)!^{c_i} T_k(x)}{1- \frac{T_k(x)^{k-1}}{(k-1)!} } + (k-1)!^{c_i}M_{0,k}(x) .\end{eqnarray*}
We have used (\ref{explicitform0}) in the last step. 

Now note that $f^{c_i}-1=(f-1)(f^{c_i-1}+f^{c_i-2}+\cdots +f +1)$. Using this formula for $f=T_k(x)^{k-1} /(k-1)!$, we see that the first summand of the last line in the last displayed array of equations is a {\em polynomial} function of $T_k(x)$, 
that is, we have proved that 
\[ \frac{T_k(x)^{k^i}}{1- \frac{T_k(x)^{k-1}}{(k-1)!} } =p\left(T_k(x)\right) +(k-1)!^{c_i} M_{0,k}(x) .\]
By Proposition \ref{polynomial-small}, the contribution of $p\left(T_k(x)\right) $ to the coefficient of $x^n$ on the right-hand side is negligible. Comparing this observation with (\ref{explicitformgen}) completes the proof. 
\end{proof}

\begin{corollary} Then for each fixed $i$, as $n$ goes to infinity, the probability that a random vertex of a random $k$-phylogenetic tree on label set $[n]$ is of rank $i$ converges to a limit $P_{k,i}$, and
\[P_{k,i} =  \frac{1}{k^{c_i}} -  \frac{1}{k^{c_{i+1}}}=\frac{1}{k^{c_i}}-\frac{1}{k^{kc_i+1}}.\]
\end{corollary}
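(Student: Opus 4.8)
The plan is to read this off the Theorem together with one elementary structural observation. First I would record that, for every $n$ for which $k$-phylogenetic trees on $[n]$ exist, i.e.\ every $n$ with $n\equiv 1\pmod{k-1}$, all such trees have the \emph{same} number of vertices: writing $n=(k-1)s+1$, every such tree has exactly $s$ non-leaf vertices and hence $ks+1$ vertices in all (this was already used in the proof of Proposition~\ref{polynomial-small}). Consequently, if we pick a tree uniformly at random among the $t_{k,n}$ trees on $[n]$ and then pick a vertex uniformly at random among its $ks+1$ vertices, the probability that the chosen vertex has rank at least $j$ equals the total number of rank-$\ge j$ vertices over all trees on $[n]$ divided by $t_{k,n}(ks+1)$; that is, it equals
\[
\frac{m_{j,k}(n)}{t_{k,n}(ks+1)}=\frac{m_{j,k}(n)}{m_{0,k}(n)},
\]
since $m_{j,k}(n)$ counts precisely those vertices and $m_{0,k}(n)=t_{k,n}(ks+1)$ (each coefficient of $M_{0,k}$ being $ks+1$ times the corresponding coefficient of $T_k$, as established earlier).

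Next I would express the probability of rank \emph{exactly} $i$ as a difference. Every vertex of rank at least $i+1$ also has rank at least $i$, so the vertices of rank exactly $i$ form the set-difference of the vertices of rank $\ge i$ and those of rank $\ge i+1$. By the previous paragraph, the probability that a random vertex of a random tree on $[n]$ has rank exactly $i$ is therefore
\[
\frac{m_{i,k}(n)-m_{i+1,k}(n)}{m_{0,k}(n)}.
\]
Letting $n\to\infty$ through the residue class $n\equiv 1\pmod{k-1}$ and applying the Theorem to the index $i+1$ — and, when $i\ge 1$, also to the index $i$ — shows that this quantity converges; when $i=0$ one instead uses the trivial identity $m_{0,k}(n)/m_{0,k}(n)\equiv 1=k^{-c_0}$, valid since $c_0=0$. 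In every case the limit is $P_{k,i}=k^{-c_i}-k^{-c_{i+1}}$.

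It then remains only to rewrite the exponent. The identity $kc_{i-1}+1=c_i$ was verified in the proof of the corollary giving the closed form $R_{i,k}(x)=T_k(x)^{k^i}/k!^{c_i}$; replacing $i$ by $i+1$ yields $c_{i+1}=kc_i+1$, whence
\[
P_{k,i}=\frac{1}{k^{c_i}}-\frac{1}{k^{kc_i+1}},
\]
as claimed. I do not expect a genuine obstacle here: the whole argument is a one-line consequence of the Theorem once the two bookkeeping points are in place, namely the structural remark that all trees on a fixed admissible label set share the same vertex count — which is exactly what identifies "a random vertex of a random tree" with the ratio $m_{i,k}(n)/m_{0,k}(n)$ — and the separate, trivial treatment of $i=0$, to which the Theorem as stated does not directly apply.
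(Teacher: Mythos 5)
Your proof is correct and is exactly the argument the paper intends (the corollary is stated there without proof, as an immediate consequence of the Theorem). The two bookkeeping points you supply — that all trees on an admissible label set have the same number $ks+1$ of vertices, so the probability is $m_{i,k}(n)/m_{0,k}(n)$, and the separate trivial case $i=0$ via $c_0=0$ — are precisely what is needed to pass from the Theorem to the stated limit $k^{-c_i}-k^{-c_{i+1}}=k^{-c_i}-k^{-kc_i-1}$.
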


\bibliographystyle{abbrv}
\bibliography{finalphylotrees}
\label{sec:biblio}

\end{document}